\documentclass[12pt]{article}

\usepackage{amssymb,amsmath,amsfonts,enumerate}

\title{
\author{{\bf Lech Pasicki}
\footnote{E-mail: pasicki@agh.edu.pl, \hspace{0.1cm} https://home.agh.edu.pl/$\sim$pasicki}}
\bf{A criterion for Cauchy sequences in db-metric spaces}}

\newtheorem{theorem}{Theorem}%[section]
\newtheorem{lemma}[theorem]{Lemma}

\newtheorem{definition}[theorem]{Definition}

\newcommand{\Xr}{\mbox{$(X,\varrho)$}}
\newcommand{\oi}{\mbox{$[0,\infty)$}}
\newcommand{\xyX}{\mbox{$x,y \in X$}}
\newcommand{\xnn}{\mbox{$(x_{n})_{n \in \mathbb{N}}$}}
\newcommand{\li}{\mbox{$\lim_{n \to \infty}$}}
\newcommand{\limn}{\mbox{$\lim_{m,n \to \infty}$}}
\newcommand{\m}{\mbox{$_{m}$}}
\newcommand{\mpe}{\mbox{$_{m+p}$}}
\newcommand{\n}{\mbox{$_{n}$}}
\newcommand{\nl}{\mbox{$_{n+1}$}}
\newcommand{\nkp}{\mbox{$_{n+kp}$}}
\newcommand{\nklp}{\mbox{$_{n+(k+1)p}$}}
\newcommand{\np}{\mbox{$_{n+p}$}}
\newcommand{\nq}{\mbox{$_{n+q}$}}

\newcommand{\rxnm}{\mbox{$\varrho(x_{n},x_{m})$}}

\newcommand{\no}{\mbox{$n_{0}$}}
\newcommand{\mo}{\mbox{$m_{0}$}}
\newcommand{\mN}{\mbox{$\mathbb{N}$}}
\newcommand{\nN}{\mbox{$n \in \mathbb{N}$}}

\newcommand{\fn}{\mbox{$f^{n}$}}

\begin{document}
\maketitle
%%\vspace{1 in}

\begin{abstract}
\par In this note a criterion for Cauchy sequences is proved which refines the one presented in `Cauchy sequences 
in b-metric spaces', Topology Appl. 373 (2025) 109477.
\end{abstract}

MSC: 54E99. Keywords: Cauchy sequence, b-metric space, db-metric space
\vspace{0.2 in} 

\par First, let us recal the following (maybe original) idea from \cite{CachSeq}
%Definition 2a.1
\begin{definition}
\label{Def2a.1}
A mapping $\varrho \colon X \times X \to \oi$ is a \textit{db-metric} (\textit{dislocated b-metric}) for $X$ if 
the following conditions are satisfied
\begin{subequations}
\label{con1}
\begin{align}
 &\varrho(x,y) = 0 \textit{ yields } x = y, \quad \xyX, \label{con1a}\\
 &\varrho(x,y) = \varrho(y,x), \quad \xyX, \label{con1b}\\
 &\varrho(x,z) \leq s[\varrho(x,y) + \varrho(y,z)], \quad x,y,z \in X, \textit{ for an } s \geq 1. \label{con1c}
\end{align}
\end{subequations}
Then $\Xr$ is a \textit{db-metric space} (\textit{dislocated b-metric space}).
\end{definition}
\par The mapping presented above is a natural extension of the dislocated-metric, a notion due to Hitzler and Seda 
\cite{HS} ($s=1$). For example, each partial metric (introduced by Matthews \cite[Definition 3.1]{Matt}), if 
nonnegative, is a d-metric. As regards b-metric, assumed is the equivalence in \eqref{con1a}. It should be noted 
that the idea of a b-metric was presented by Czerwik in \cite{Cze} (for $s=2$).
\par Now, let us prove our criterion, which refines \cite[Lemma 2.3]{CachSeq}.
%Lemma 2a.2
\begin{lemma}
\label{Le2a.2}
For $X = \{x\n \colon \nN\}$ and $\varrho \colon X \times X \to \oi$ let \eqref{con1b}, \eqref{con1c} be 
satisfied. Then $\limn \varrho(x\n,x\m) = 0$ if and only if the following conditions hold 
\begin{subequations}
\label{con2}
\begin{align}
&\li \varrho(x\nl,x\n) = 0, \label{con2a}\\
\begin{split}
& \textit{for each small } \delta > 0 \textit{ there exist } \no,p \in \mN \textit{, and  } 0<\lambda<1 
         \textit{ such that } \\ 
& 0 < \varrho(x\n,x\m) < \delta \textit{ yields } \varrho(x\np,x\mpe) < \delta\lambda/s, \quad 
  m,n > \no. \label{con2b}
\end{split} 
\end{align}
\end{subequations}
\end{lemma}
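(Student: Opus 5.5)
The plan is to prove the two implications separately. Necessity is immediate. Sufficiency goes by a shifting-by-$p$ induction, whose base case is supplied by \eqref{con2a}.

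\emph{Necessity.} Assume $\limn \varrho(x\n,x\m)=0$. Then \eqref{con2a} holds, because $(n+1,n)$ is a special pair of indices. For \eqref{con2b}, fix $\delta>0$ and take $p=1$, $\lambda=1/2$. Choose $\no$ such that $\varrho(x\n,x\m)<\delta/(2s)$ for all $m,n>\no$. Then for $m,n>\no$ we have $\varrho(x\nl,x_{m+1})<\delta\lambda/s$ whatever the hypothesis. So the implication in \eqref{con2b} holds trivially.

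\emph{Sufficiency.} Fix a small $\delta>0$ and take $\no,p,\lambda$ from \eqref{con2b}. I will pick a small $\eta>0$, to be fixed below. By \eqref{con2a} and the iterated inequality \eqref{con1c}, for any $j\le p$,
\[\varrho(x\n,x_{n+j})\le \sum_{i=1}^{j} s^{i}\varrho(x_{n+i-1},x_{n+i}).\]
Since $p$ is fixed, this gives $N>\no$ such that $\varrho(x\n,x_{n+j})<\eta$ for all $n\ge N$ and $0\le j\le p$; for $j=0$ I use $\varrho(x\n,x\n)\le 2s\varrho(x\n,x\nl)$. I then prove by induction on $k$ the statement
\[\varrho(x\n,x\nq)<\delta \quad\text{for all } n\ge N,\ 0\le q\le kp.\]
The case $k=1$ holds as soon as $\eta\le\delta$. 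Now suppose the statement holds for $k$, and let $q\le kp$. Then
\[\varrho(x\n,x_{n+q+p})\le s\bigl[\varrho(x\n,x\np)+\varrho(x\np,x_{n+q+p})\bigr].\]
If $0<\varrho(x\n,x\nq)<\delta$, then \eqref{con2b} gives $\varrho(x\np,x_{n+q+p})<\delta\lambda/s$. Hence
\[\varrho(x\n,x_{n+q+p})<s\eta+\delta\lambda<\delta,\]
provided $\eta<\delta(1-\lambda)/s$. This covers all offsets up to $(k+1)p$, which closes the induction. Since $\delta$ was arbitrary, we get $\limn\varrho(x\n,x\m)=0$.

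\emph{Main obstacle.} The delicate case is $\varrho(x\n,x\nq)=0$. Here \eqref{con2b} says nothing, and we cannot conclude $x\n=x\nq$ because \eqref{con1a} is not assumed. I would bypass \eqref{con2b} and estimate directly with \eqref{con1c}:
\[\varrho(x\np,x_{n+q+p})\le s\varrho(x\np,x\n)+s^2\bigl[\varrho(x\n,x\nq)+\varrho(x\nq,x_{n+q+p})\bigr]<(s+s^2)\eta.\]
This uses $n+q\ge N$. So the same bound $\delta\lambda/s$ is obtained if additionally $\eta\le\delta\lambda/(s^2+s^3)$. Choosing $\eta$ below both thresholds, and below $\delta$, makes the induction go through in every case.
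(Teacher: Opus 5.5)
Your proof is correct and follows essentially the paper's route. Offsets up to $p$ are controlled by \eqref{con2a} together with the iterated inequality \eqref{con1c}; then \eqref{con2b} is applied inductively to shift by $p$, with the case $\varrho=0$ handled separately by the triangle inequality. The differences are cosmetic. You induct on all offsets up to $kp$ at once, so the paper's final remainder step and its weaker bound $\delta(1-\lambda)+s\delta$ are not needed. Your zero case is also more roundabout than necessary, since directly $\varrho(x_n,x_{n+q+p})\le s\varrho(x_{n+q},x_{n+q+p})<s\eta<\delta$, as in the paper.
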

{\bf Proof}
If $\limn \varrho(x\n,x\m) = 0$, then \eqref{con2a} obviously holds, and for a $p \in \mN$ and all $m,n \in \mN$ 
we have $\varrho(x\np,x\mpe) < \delta\lambda/s$, which means that condition \eqref{con2b} is satisfied.
\par Let $\delta > 0$ be arbitrary. Now, for a $p$ as in \eqref{con2b} and $q \in \{2,\ldots,p\}$ (if $p \geq 2$) 
conditions \eqref{con1c}, \eqref{con2a} imply
\begin{equation*}
\begin{split}
 &\varrho(x\nq,x\n) = \varrho(x\n,x\nq) \leq s\varrho(x\n,x\nl) + s\varrho(x\nl,x\nq) \leq \cdots \leq \\
 &s\varrho(x\n,x\nl) + \cdots + s^{q-1}\varrho(x_{n+q-1},x\nq) \to 0, \textit{ for } n \to \infty.
\end{split}
\end{equation*}
The same holds for $q=0$, as $\varrho(x\n,x\n) \leq 2s\varrho(x\nl,x\n)$. Consequently (see \eqref{con2a}), we get 
\begin{equation}
\label{con3}
\varrho(x\nq,x\n) < \delta(1-\lambda)/s, \textit{ for all } n > \mo \geq \no, \textit{ and } q \in \{0,\ldots,p\}.
\end{equation}
We may put $\no = \mo$ in \eqref{con2b}. 
\par Now, assume
\[
 \varrho(x\nkp,x\n) < \delta, \textit{ for a } k \geq 1. 
\]
If $\varrho(x\nkp,x\n) = 0$, then \eqref{con1c}, \eqref{con3} yield
\begin{equation*}
\begin{split}
 &\varrho(x\nklp,x\n) \leq s\varrho(x\nklp,x\nkp) + s\varrho(x\nkp,x\n) = \\
 &s\varrho(x\nklp,x\nkp) < \delta(1 - \lambda) < \delta.
\end{split}
\end{equation*}
For $0 < \varrho(x\nkp,x\n) < \delta$, from \eqref{con2b} and \eqref{con3} we obtain
\[
 \varrho(x\nklp,x\n) \leq s\varrho(x\nklp,x\np) + s\varrho(x\np,x\n) < \delta\lambda + \delta(1-\lambda) = \delta.
\]
Thus, by induction we get
\begin{equation}
\label{con4}
 \varrho(x\nkp,x\n) < \delta, \quad k \in \mN.
\end{equation}
\par Any $m \in \mN$, $m \geq n$ can be written as $m = n + kp + q$, for some $k \in \{0\} \cup \mN$, 
$q \in \{0,1,\ldots,p-1\}$. Now, by \eqref{con3}, \eqref{con4}, for $m,n > \no$ we obtain
\[
 \varrho(x\m,x\n) = \varrho(x_{n+kp+q},x\n) \leq s\varrho(x_{n+kp+q},x\nkp) + s\varrho(x\nkp,x\n) < 
    \delta(1 - \lambda) + s\delta,
\]
which means that $\limn \rxnm = 0$.
$\quad \square$
\par It should be noted that condition \eqref{con2a} cannot be disregarded. Let us consider $x\n =n$, $\nN$. 
Then $0 < \varrho(x\n,x\m) = |n -m| < \delta$ never holds for $\delta < 1$. Consequently, condition \eqref{con2b} 
is satisfied, while $\xnn$ is not a Cauchy sequence.
\par Assume that a selfmapping $f \colon X \to X$ on a db-metric space $\Xr$ is such that for a $c < 1$
\[
  \varrho(fx,fy) \leq c\varrho(x,y), \quad \xyX
\]
holds. Then for $x\n = \fn x$, $\nN$ we have
\[
  \varrho(x\np,x\mpe) \leq c^{p}\varrho(x\n,x\m),
\]
and \eqref{con2b} is satisfied for any $p$ such that $c^{p} < \lambda/s$. Thus, the Banach fixed point 
theorem is an almost immediate consequence of Lemma \ref{Le2a.2}.
\par In \cite{CachSeq} some fixed point theorems are presented. Maybe, Lemma \ref{Le2a.2} will be a useful tool 
to obtain more advanced results.
% \section*{Acknowledgements}
% This work was partially supported by the Faculty of Applied Mathematics AGH UST statutory tasks within subsidy 
% of the Polish Ministry of Science and Higher Education, grant no. 16.16.420.054.

\end{document}